\newtheorem{theorem}{Theorem}
\newtheorem{lemma}{Lemma}
\begin{document}

\thispagestyle{empty}
\vskip2cm
\begin{center}
{\bf {\Large Survival Analysis of Particle Populations in Branching Random Walks}}
\end{center}

\begin{center}
{\large Anastasiia Rytova
and Elena Yarovaya}\footnote{email:
yarovaya@mech.math.msu.su, address: Department of  Mechanics and Mathematics, Moscow State
University, Moscow 119991, Russia}
\end{center}
\begin{center}
{\it Department of Probability Theory, Faculty of Mathematics and Mechanics, \\Lomonosov Moscow State
University,  Moscow, Russia}
\end{center}
\vskip3mm

\begin{abstract}
It is a common practice to describe branching random walks in
terms of birth, death and walk of particles, which makes it easier to use them
in different applications. The main results obtained for the models
of symmetric continuous-time heavy-tailed branching random walks on a multidimensional
lattice.
We will be mainly interested in studying the problems related to the limiting behavior
of branching random walks such as the existence of phase transitions under change
of various parameters, the properties of the limiting distribution and the survival
probability of a particle population.
Emphasis is made on the survival analysis.
The answers to these and other questions essentially
depend on numerous factors which affect the properties of a branching random
walk. Therefore, we will try to describe how the properties of a branching
random walk depend on such characteristics of the underlying random walk
as finiteness or infiniteness of the variance of jumps.
The presented results
are based on the Green's function representations of the transition probabilities of a
branching random walk.
\medskip

{\it Keywords:} Branching random walks; recurrence criteria; Green's functions; nonhomogeneous environments; heavy tails.
\medskip

\textbf{MSC 2010:} 60J80, 60J35, 62G32
\end{abstract}

\section{Introduction}
It is a common practice to describe branching random walks (BRWs) in terms of birth, death and walk of particles, which makes it easier to use them in different areas of nature sciences: statistical physics \citep{ZMRS88:e}, chemical kinetics \citep{GarMol90:e}, the theory of homopolymers \citep{CKMV09:e}, population dynamic studies \citep{BMW16:e}.
The use
of BRWs on multidimensional lattices  with one centre of particle generation and a finite variance of random walk jumps in the reliability theory was discussed in \citep{Y10-AiT:e}. In the present paper,  the models of symmetric continuous-time BRWs on the $d$-dimensional lattice ${\mathbf{Z}}^{d}$, $d\geq 1$, with a few sources of particle birth and death at lattice points, called branching sources, are reviewed. Emphasis is made on the survival analysis of the particle population on  ${\mathbf{Z}}^{d}$ for a BRW  with one branching source.

We will be mainly interested in studying the problems related to the limiting behavior of BRWs such as the existence of phase transitions under change of various parameters, the properties of the limiting distribution and the survival probability of a particle population.
The answers to these and other questions essentially depend on numerous factors which affect the properties of a BRW. Therefore, we will try to describe, rather detailed, how the properties of a BRW depend on such characteristics of the underlying random walk as finiteness or infiniteness of the variance of jumps.
The presented results are based on the Green's function representations  of the transition probabilities of the underlying branching walk.

Introduction is followed by a formal description of the model of
a symmetric BRW with a finite number of branching sources described in Section~\ref{sect:Model}. Moreover,  in Section~\ref{sect:Model} we formulate the recurrence criteria for BRWs with a finite and infinite variance of random walk jumps in terms of Green's functions and remind some recent results on the symmetric BRWs. In Section~\ref{sect:SA} we represent equations for the probability of the presence of particles
at the origin and the survival probability of a particle population without any assumptions on the variance of jumps of the underlying random walk and discuss very shorty their possible applications to the reliability theory.
In Section~\ref{sect:SPHTBRW} main theorems whose proofs are based on the results of~Section~\ref{sect:SA} are obtained. In Section~\ref{sect:Conclusion} we give some remarks on the asymptotic behavior of survival probabilities for BRWs with a finite variance of jumps for comparison with the case of  heavy-tailed BRWs.

\section{Model and Previous Results}
\label{sect:Model}
The evolution of the particle system on ${\mathbf{Z}}^{d}$ is described by the
number of particles at time $t$ at each point $y\in {\mathbf{Z}}^{d}$ under
the assumption that at the time $t=0$ the system consists of one particle
located at the point $x$.  The particle walks on ${\mathbf{Z}}^{d}$ until it reaches
one of the  points $x_{1},x_{2},\dots,x_{N}$, $N<\infty$,
 where it can die or produce a random
number of offsprings.  It is assumed that evolution of the newborn
particles obeys the same law independently of the rest of the particles and
the prehistory.  Now we proceed to a full description of the model.

The random walk of particles is defined by an infinitesimal
transition matrix $A=\Vert a(x,y)\Vert_{x,y \in {\mathbf{Z}}^{d}}$ and
is assumed to be symmetric, $a(x,y)=a(y,x)$; homogeneous,
$a(x,y)=a(0,y-x)=a(y-x)$; irreducible, that is, for every $z\in \mathbf{Z^{d}}$ there exists a set of vectors $z_{1},z_{2},\dots,z_{k}\in \mathbf{Z^{d}}$ such that $z=\sum_{i=1}^{k}z_{i}$ and $a(z_{i})\neq0$ for $i=1,2,\dots,k$;
regular, $\sum_{x \in {\mathbf{Z}}^{d}}a(x)=0$, where $a(x)\geq 0$ for $x\neq 0$ and $-\infty<a(0)<0$.  In virtue of symmetry and homogeneity
of the random walk, the conditions $\sum_{y\in {\mathbf{Z}}^{d}}
a(x,y)=\sum_{x\in {\mathbf{Z}}^{d}} a(x,y)=0$ are satisfied for the
matrix $A$.

At some points $x_{1},x_{2},\ldots,x_{N}$, called
\emph{branching sources}, every particle can die or give offsprings. The reproduction law
at the source $x_{i}$, $i=1,2,\dots,N$, is defined by the continuous-time Bienaym\'e-Galton-Watson branching process \citep{Se2:e,AN:e},
by the following infinitesimal generation function
\[
f(u,x_{i})=\sum_{n=0}^\infty b_n(x_{i}) u^n,\quad 0\le u \le1,
\]
where $b_n(x_{i})\ge0$ for $n\ne 1$, $b_1(x_{i})<0$ and $\sum_{n} b_n(x_{i})=0$.
We assume  $f^{(r)}(1,x_{i})<\infty$ for every $r\in{\mathbf{N}}$. In the future,
an important role will play the values
\[
\beta_{i}=f'(1,x_{i})=\sum_{n}nb_{n}(x_{i}) =
(-b_{1}(x_{i}))\left(\sum_{n\neq1}n\frac{b_{n}(x_{i})}{(-b_{1}(x_{i}))}-1\right),
\]
for $i=1,2,\dots,N$, called \emph{intensity} of the branching source  $x_{i}$, where the last sum is the mean number of offsprings  born at the point
$x_{i}$.

By $p(t,x,y)$ we denote the transition probability of the underlying random walk. This
function is implicitly determined by the transition intensities $a(x,y)$
(see, e.g., \citep{GS:e,YarBRW:e}). Then Green's function of the
operator $\mathcal{A}$ corresponding to the matrix $A$ (see detailed definition in \citep{Y18-TPA:e}) can be represented as the Laplace transform of the
transition probability $p(t,x,y)$:
\[
G_\lambda(x,y):=\int_0^\infty e^{-\lambda t}p(t,x,y)\,dt,\quad \lambda\geq 0.
\]

The analysis of BRWs essentially depends on whether the value of
$G_{0}:=G_{0}(0,0)$ is finite or infinite. As is known, see, e.g., \citep{Sp69:e}, a random walk is \emph{transient} if $G_{0}(0,0)<\infty$ and
\emph{recurrent} if $G_{0}(0,0)=\infty$. We generalize this definition on  BRWs. A~BRW is called \emph{transient} if the underlying random walk is \emph{transient} and \emph{recurrent}  if the underlying random walk is \emph{recurrent}. If the variance of jumps of the underlying random walk is finite,
that is,
\begin{equation}\label{E:findisp}
\sum_{z\in \mathbf{Z}^{d}} |z|^2 a(z)<\infty,
\end{equation}
where $|z|$ is the Euclidian norm  of the vector $z$, then we get the following \emph{recurrence criteria} for BRWs with \emph{a finite variance} of jumps: $G_{0}=\infty$ for
$d=1,2$, and $G_{0}<\infty$ for $d\ge 3$ (see, e.g., \citep{YarBRW:e}) followed from the asymptotic relation $p(t,x,y)\sim \gamma_{d}t^{-\frac{d}{2}}$ as $t\to \infty$, where $\gamma_{d}$ is a positive constant depending on dimension of ${\mathbf{Z}}^{d}$.

Now we consider the case with an another assumption on the transition intensities.
We suppose that for all $z\in\mathbf{Z}^{d}$ with sufficiently large norm $|z|$ the asymptotic
relation
\begin{equation}\label{E:infindisp}
a(z)\sim\frac{H\left(\frac{z}{|z|}\right)}{|z|^{d+\alpha}},\quad\alpha\in(0,2),
\end{equation}
holds, where $H(\cdot)$ is a continuous positive symmetric function  on the
sphere
$\mathbf{S}^{d-1}=\{z\in\mathbf{R}^{d} : |z|=1\}$.
Condition (\ref{E:infindisp}), unlike (\ref{E:findisp}),
leads to the divergence of the series in~\eqref{E:findisp} and thereby
to infinity of the variance of jumps. Under assumption~\eqref{E:infindisp} a BRW is called \emph{a heavy-tailed BRW}. In this case we get the following \emph{recurrence criteria} for BRWs  with \emph{infinite variance} of jumps:
$G_{0}=\infty$ for $d=1$, $\alpha \in [1,2)$, and $G_{0}$  is finite if $d
= 1$, $\alpha \in (0,1)$, or $d\geq 2$, $\alpha \in (0,2)$,
followed from the behavior of $p(t,x,y)\sim h_{d,\alpha}t^{-\frac{d}{\alpha}}$ as $t\to \infty$, where $h_{d,\alpha}$ is a positive constant depending on dimension of ${\mathbf{Z}}^{d}$ (see, details in \citep{RY16:MN,Y13-CommStat:e}).

Put
\[
\beta_{c}:=\frac{1}{G_{0}(0,0)}.
\]
Then $\beta_{c}=0$ for $G_{0}=\infty$ and $\beta_{c}>0$ for $G_{0}<\infty$.

Let $\mu_{t}(y)$ be the number of particles at an arbitrary lattice point $y\in\mathbf{Z}^d$  and  $\mu_{t}$ be the number of particles on the entire lattice, called \emph{the particle population}, at time $t$. The
moments of numbers of particles $\mu_{t}(y)$ and  $\mu_{t}$  are denoted, respectively, by $m_n(t,x,y):={\mathnormal{E}}_x
\mu^n_t(y)$ and $m_n(t,x):={\mathnormal{E}}_x \mu^n_t$, $n\in
{\mathbf{N}}$, where ${\mathnormal{E}}_x$ is the conditional expectation
provided that $\mu_0(\cdot)=\delta_x(\cdot)$.  The temporal asymptotic behavior of
the moments for one branching source under the condition~(\ref{E:findisp}) was studied in details in \citep{BY-1:e,YarBRW:e}.

The value $\beta_{c}$ is
critical in the sense that the asymptotic behavior of $\mu_{t}(y)$ and $\mu_{t}$ is
different for $\beta>\beta_{c}$, $\beta=\beta_{c}$ and
$\beta<\beta_{c}$.  Additionally, in some publications, of which we mention
\citep{ABY98-1:e,BY-1:e,YarBRW:e,Y09DM:e}, it was established the difference in the limit
behavior for $t\to \infty$ not only of the moments but also of the numbers of particles
themselves.  Therefore, the following definition of the BRW criticality makes
sense.  A BRW is referred to as \emph{supercritical} if $\beta>\beta_{c}$, \emph{critical} if
$\beta=\beta_{c}$, and \textit{subcritical} if $\beta<\beta_{c}$.

Recall known results, see, e.g.~\citep{YarBRW:e}, for a BRW with one branching source that will be needed in Section~\ref{sect:SPHTBRW}.  If $\beta>\beta_{c}$, then the equation
\[
G_{\lambda}(0, 0) =\frac{1}{\beta}
\]
has a single positive solution $\lambda_{0}$, whence it follows that the
random variables $\mu_{t}(y)$ and $\mu_{t}$ have limit distributions as
$t\to \infty$ \citep{BY-1:e,YarBRW:e}, in the sense of convergence of the moments
under the normalization $e^{-\lambda_{0}t}$.
In \citep{Y16-MCAP:e} it was established for BRWs with $N$ branching sources of equal intensities $\beta$ that, under each of the conditions~\eqref{E:findisp} and~\eqref{E:infindisp}, if $G_0=\infty$ then $\beta_c=0$ for $N\geq1$, if $G_0<\infty$ then $\beta_c=G_0^{-1}$ for $N=1$ and $0<\beta_c<G_0^{-1}$ for $N\geq2$. Note that for heavy-tailed BRWs the critical value $\beta_c$ depends on the parameter $\alpha$. In \citep{KhristolubovYarovaya} the asymptotics of the moments of the particle population $\mu_t$ for subcritical BRWs without any assumptions on the variance of random walk jumps and with $N\geq1$ sources of different positive intensities was found.

For $\beta\leq \beta_{c}$, the growth of the moments $\mu_t$ and
$\mu_{t}(y)$ of the particle numbers appears to be irregular with respect
to the moment number $n$ which means that the behavior
of the particle numbers $\mu_t$ and $\mu_{t}(y)$ as $t\to \infty$ differs appreciably from
the behavior of their moments and the study of the BRW survival probabilities becomes relevant.

\section{Survival Probabilities}\label{sect:SA}
We review in what follows the results on symmetric BRWs which may find
use in the studies of reliability. Let us consider for simplicity a  BRW with one branching source located at the origin. The notion of a system reliability is
related with the concepts of the \emph{probability $Q(t,x,y):=P_{x}\{\mu_{t}(y)>0\}$ of presence of particles at the  point $y\in\mathbf{Z}^d$}
and the \emph{survival
probability $Q(t,x):=P_{x}\{\mu_{t}>0\}$ of the particle population}.
The function $Q(t,x)$ may be called \emph{the system reliability function}.  The
asymptotic behavior of $Q(t,x,0)$ describes the number of working elements.
That is why in \citep{Y10-AiT:e} consideration was given to the asymptotic behavior of the
survival probability $Q(t,x)$ of the particle population on the lattice and
the probability $Q(t,x,0)$ of presence of particles at the origin at time~$t$.

Denote by $Q(t):=Q(t,0)$, $q(t):=Q(t,0,0)$, and $p(t):=p(t,0,0)$.
The equations for $Q(t,x,y)$ and $Q(t,x)$ were established in
\citep{Y09DM:e}.  If $x=y=0$, then they are
as follows:
\begin{align}
q(t)&=p(t)-\int_{0}^{t}p(t-s)f(1-q(s))\,ds,\\
\label{E:Q}
Q(t)&=1-\int_{0}^{t}p(t-s)f(1-Q(s))\,ds.
\end{align}
Under the condition~(\ref{E:infindisp}) the equations for $Q(t,x,y)$ and $Q(t,x)$ have the same representation as in the case of a finite variance of random walk jumps~(\ref{E:findisp}).
We do not give related the proof here because one can realize it by the proof scheme for  BRWs with a finite variance of jumps (see, Theorem~3 in \citep{Y09DM:e}). But in virtue of the difference in recurrence criteria between the BRWs under the conditions~\eqref{E:findisp} and~\eqref{E:infindisp}, we obtain another limit theorems for heavy-tailed BRWs in contrast to theorems established in \citep{Y09DM:e,Y10-CS:e}  and \citep{Y10-AiT:e} for BRWs with a finite variance of random walk jumps.

\section{Survival Probabilities for Heavy-Tailed BRWs}\label{sect:SPHTBRW}

The proofs of the theorems on the asymptotic behavior of the survival probability~\eqref{E:Q} for the heavy-tailed case~\eqref{E:infindisp} will be based on some statements about asymptotic behavior of the function $p(t,0,0)-p(t,x,0)$, $x\in\mathbf{Z}^d$, as $t\to\infty$; properties of the infinitesimal generation function  $f(u):=f(u,0)$; properties of the Laplace generation function $F(z,t,x)$ and ge\-ne\-ral\-izations of the theorems on a limit behavior of the survival probability for the case~\eqref{E:findisp} of finite variance of jumps. Let us prove these statements.

\begin{theorem}
For each $d\geq1$, $\alpha\in(0,2)$, and $x\in\mathbf{Z}^d$ the following asymptotic relation holds:
\begin{equation}\label{eq2114}
p(t,0,0) - p(t,x,0) \sim \frac{\tilde\gamma_{d,\alpha}(x)}{t^{\frac{d+2}{\alpha}}},\quad  t\to\infty,
\end{equation}
where
\[
\tilde \gamma_{d,\alpha}(x) = \frac{1}{2(2\pi)^d}\int\limits_{\mathbf{R}^d}\langle \omega,x\rangle^2 e^{-\eta\left(\frac{\omega}{|\omega|}\right)|\omega|^{\alpha}}\,d\omega
\]
and $\eta(u)$ is a continuous function on the unit sphere satisfying $0<\eta_{*}\leq \eta(u)\leq\eta^{*}<\infty$ for $u\in\mathbf{S}^{d-1}$.
\end{theorem}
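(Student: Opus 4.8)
The plan is to pass to the Fourier transform on the torus $[-\pi,\pi]^d$, where the difference of transition probabilities becomes an elementary integral, and then extract its large-$t$ behaviour by a scaling argument. Since the walk is symmetric, homogeneous and regular, the symbol
\[
\psi(\theta) := \sum_{z\in\mathbf{Z}^d} a(z)\bigl(1-\cos\langle\theta,z\rangle\bigr),\qquad \theta\in[-\pi,\pi]^d,
\]
is nonnegative, vanishes only at $\theta=0$, and the Fourier image of $p(t,0,\cdot)$ solves $\partial_t\hat p=-\psi\hat p$ with $\hat p(0,\cdot)=1$, so that $\hat p(t,\theta)=e^{-\psi(\theta)t}$. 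Inverting and using $p(t,x,0)=p(t,0,-x)$ together with the symmetry $\psi(\theta)=\psi(-\theta)$, I would first record the representation
\[
p(t,0,0)-p(t,x,0)=\frac{1}{(2\pi)^d}\int_{[-\pi,\pi]^d}\bigl(1-\cos\langle\theta,x\rangle\bigr)\,e^{-\psi(\theta)t}\,d\theta .
\]

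Next I would establish the behaviour of the symbol near the origin, which is the same ingredient that underlies the known asymptotics $p(t)\sim h_{d,\alpha}t^{-d/\alpha}$. Approximating the defining sum by an integral and invoking the tail condition~\eqref{E:infindisp}, the heavy-tailed part contributes a term of order $|\theta|^\alpha$, while the remaining light part contributes only $O(|\theta|^2)$, which is subdominant as $\theta\to0$ because $\alpha<2$. This yields
\[
\psi(\theta)\sim\eta\!\left(\tfrac{\theta}{|\theta|}\right)|\theta|^\alpha,\qquad \eta(u)=\int_{\mathbf{R}^d}\frac{H(w/|w|)}{|w|^{d+\alpha}}\bigl(1-\cos\langle u,w\rangle\bigr)\,dw,
\]
with $\eta$ continuous and bounded between positive constants $\eta_*$ and $\eta^*$ on $\mathbf{S}^{d-1}$ (the integral converges precisely because $0<\alpha<2$). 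By irreducibility $\psi$ is bounded below by a positive constant on $\{\,|\theta|\ge\delta\,\}$, so the contribution of that region to the integral is $O(e^{-ct})$ and negligible at every algebraic scale.

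On the neighbourhood $\{|\theta|\le\delta\}$ I would substitute $\theta=t^{-1/\alpha}\omega$, producing the factor $t^{-d/\alpha}$ from $d\theta$ and, through the expansion $1-\cos\langle\theta,x\rangle=\tfrac12 t^{-2/\alpha}\langle\omega,x\rangle^2+O\!\bigl(t^{-4/\alpha}\langle\omega,x\rangle^4\bigr)$, an additional $t^{-2/\alpha}$ from the leading term. Since $\psi(t^{-1/\alpha}\omega)\,t\to\eta(\omega/|\omega|)|\omega|^\alpha$ pointwise, the scaled integrand converges to $\tfrac12\langle\omega,x\rangle^2 e^{-\eta(\omega/|\omega|)|\omega|^\alpha}$, and the uniform lower bound $\psi(\theta)\ge\tfrac{\eta_*}{2}|\theta|^\alpha$ for $|\theta|\le\delta$ supplies the integrable majorant $\tfrac12\langle\omega,x\rangle^2 e^{-(\eta_*/2)|\omega|^\alpha}$. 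Dominated convergence then identifies the leading term as $t^{-(d+2)/\alpha}\tilde\gamma_{d,\alpha}(x)$, while the quartic remainder carries the prefactor $t^{-(d+4)/\alpha}=o(t^{-(d+2)/\alpha})$ and is absorbed into the error, giving~\eqref{eq2114}.

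The main obstacle is this rescaling step: one must produce a single $t$-independent integrable majorant on the expanding domain $\{|\omega|\le\delta t^{1/\alpha}\}$. This hinges on upgrading the pointwise asymptotics $\psi(\theta)/|\theta|^\alpha\to\eta(\theta/|\theta|)$ to a bound uniform in the direction $\theta/|\theta|$, so that the stretched-exponential decay $e^{-(\eta_*/2)|\omega|^\alpha}$ controls the quadratic growth $\langle\omega,x\rangle^2$. Additional care is needed when $\alpha<1$, where the series defining $\psi$ converges only conditionally, so the comparison of the sum with its integral analogue must be carried out with attention to the slowly decaying terms.
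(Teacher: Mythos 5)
Your proposal is correct and follows essentially the same route as the paper's proof: Fourier inversion yielding $(2\pi)^{-d}\int_{[-\pi,\pi]^d}(1-\cos\langle\theta,x\rangle)\,e^{\phi(\theta)t}\,d\theta$, the expansion of $1-\cos\langle\theta,x\rangle$ into its quadratic part plus a higher-order remainder, the symbol asymptotics $\phi(\theta)\sim-\eta\bigl(\theta/|\theta|\bigr)|\theta|^{\alpha}$ (which the paper imports from Kozyakin (2016) rather than deriving, and which is exactly the nontrivial ingredient behind the direction-uniform bound you correctly identify as the crux), and the rescaling $\theta=t^{-1/\alpha}\omega$ with the region away from the origin killed exponentially. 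The only variations are technical --- you pass to the limit by dominated convergence with a single stretched-exponential majorant where the paper uses a two-sided $\varepsilon$-sandwich evaluated in generalized polar coordinates, and your caution about conditional convergence for $\alpha<1$ is unnecessary, since $\sum_{z\neq0}a(z)=-a(0)<\infty$ makes the series defining $\phi$ absolutely convergent for every $\alpha\in(0,2)$.
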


\begin{proof}
Denote the Fourier transform of the operator $\mathcal{A}$ by $\phi(\theta)$, that is
\[
\phi(\theta)=\sum_{x\in\mathbf{Z}^d}a(x)e^{i\langle x,\theta\rangle},\quad \theta\in[-\pi,\pi]^d,
\]
where $\langle\cdot,\cdot\rangle$ is the Euclidean inner product in $\mathbf{R}^d$.

From, e.g., \citep{Y13-CommStat:e}, the transition probability for each $x,y\in\mathbf{Z}^d$, $t\geq0$, can be expressed in the form
\[
p(t,x,y) = \frac{1}{(2\pi)^d}\int\limits_{[-\pi,\pi]^d}e^{i\langle \theta,y-x\rangle}\,e^{\phi(\theta)t}d\theta.
\]
Due to symmetry of $a(x)$ on $\mathbf{Z}^d$, this implies
\begin{equation}\label{eq2117}
p(t,0,0) - p(t,x,0) = \frac{1}{(2\pi)^d}\int\limits_{[-\pi,\pi]^d}(1-\cos\langle \theta ,x\rangle)\,e^{\phi(\theta)t}d\theta.
\end{equation}
We can represent the function $1-\cos\langle\theta,x\rangle$ as
\[
1-\cos\langle \theta,x\rangle = \frac{1}2\langle \theta,x\rangle^2 + \nu(\theta,x),
\]
where $|\nu(\theta,x)|\leq c |x|^3|\theta|^3$ for some $c<\infty$.

By the definition of $\phi(\theta)$ and symmetry of $a(\cdot)$, the following equality holds
\[
\phi(\theta)=\sum_{x\in\mathbf{Z}^d}a(x)\cos\langle\theta,x\rangle,
\]
where the series $\sum_{x}a(x)$ is absolutely convergent, the functions $\cos\langle\theta,x\rangle$ are continuous and uniformly bounded  by unit in $\theta\in[-\pi,\pi]^d$ and $x\in\mathbf{Z}^d$. Hence, the function $\phi(\theta)$ is continuous on $[-\pi,\pi]^{d}$. Then for any fixed $t\geq0$ and $x\in\mathbf{Z}^{d}$ each of the functions $(1-\cos\langle\theta,x\rangle)e^{\phi(\theta)t}$, $\frac{1}2\langle\theta,x\rangle^2e^{\phi(\theta)t}$ and $\nu(\theta,x)e^{\phi(\theta)t}$ is integrable on the hypercube $[-\pi,\pi]^d$.

As shown in~\citep{Koz:IJARM16}, the function $\phi(\theta)$ has the following asymptotics
\begin{equation}\label{eq2118}
\phi(\theta) \sim -\eta\left(\frac{\theta}{|\theta|}\right)|\theta|^{\alpha},\quad |\theta|\to0,
\end{equation}
where $\eta(u)$ is a continuous function on the unit sphere satisfying $0<\eta_{*}\leq \eta(u)\leq\eta^{*}<\infty$ for $u\in\mathbf{S}^{d-1}$. To use the homogeneity of the function in the right-hand side of \eqref{eq2118} and to reduce the variable~$t$ in the exponent of the integral \eqref{eq2117}, we replace $\theta=\omega t^{-\frac{1}{\alpha}}$. Then for $p(t,0,0)-p(t,x,0)$ the following representation is valid
\begin{equation}\label{eq1}
\frac{1}{2(2\pi)^d}\ t^{-\frac{d+2}{\alpha}}L_1(t) + \frac{1}{(2\pi)^d}\ t^{-\frac{d+2}{\alpha}} L_2(t),
\end{equation}
where the functions $L_1(t)$ and $L_{2}(t)$ are as follows:
\begin{align*}
L_1(t) &= \int\limits_{[-\pi t^{\frac{1}{\alpha}},\pi t^{\frac{1}{\alpha}}]^d}\langle \omega ,x\rangle^2 e^{\phi\left(t^{-\frac{1}{\alpha}}\omega\right)t}\,d\omega,\\
L_2(t) &= \int\limits_{[-\pi t^{\frac{1}{\alpha}},\pi t^{\frac{1}{\alpha}}]^d}t^{\frac{2}{\alpha}}\nu(t^{-\frac{1}{\alpha}}\omega,x)e^{\phi\left(t^{-\frac{1}{\alpha}}\omega\right)t}\,d\omega.
\end{align*}

Let us fix an arbitrary $\varepsilon>0$. According to~\eqref{eq2118}, choose a $\rho_{\varepsilon}<\pi$ such that in the set
\[
\mathcal{O}\left(\rho_{\varepsilon}t^{\frac{1}{\alpha}}\right):=\left\{w\in\mathbf{R}^d:~ |w|<\rho_{\varepsilon}t^{\frac{1}{\alpha}}\right\}
\]
the relations
\begin{equation}\label{eq2}
-(1+\varepsilon)\,\eta\left(\frac{w}{|w|}\right)|t^{-\frac{1}{\alpha}}w|^{\alpha}\leq \phi\left(t^{-\frac{1}{\alpha}}w\right) \leq -(1-\varepsilon)\,\eta\left(\frac{w}{|w|}\right)|t^{-\frac{1}{\alpha}}w|^{\alpha}
\end{equation}
 hold. Then the function $L_1(t)$ can be represented as the sum $L_{1,1}(\varepsilon,t) + L_{1,2}(\varepsilon,t)$, where
\begin{align*}
L_{1,1}(\varepsilon,t) &= \int\limits_{\mathcal{O}\left(\rho_{\varepsilon}t^{\frac{1}{\alpha}}\right)}\langle \omega ,x\rangle^2\,e^{\phi\left(t^{-\frac{1}{\alpha}}\omega\right)t}\,d\omega,\\
L_{1,2}(\varepsilon,t) &= \int\limits_{[-\pi t^{\frac{1}{\alpha}},\pi t^{\frac{1}{\alpha}}]^d\setminus \mathcal{O}\left(\rho_{\varepsilon}t^{\frac{1}{\alpha}}\right)}\langle \omega ,x\rangle^2\,e^{\phi\left(t^{-\frac{1}{\alpha}}\omega\right)t}\,d\omega.
\end{align*}

Consider the function $L_{1,2}(\varepsilon,t)$. Note  that according to Lemma~2.1.3 in~\citep{YarBRW:e}, whose proof remains valid for the heavy-tailed case, for each $\theta\in[-\pi,\pi]^d$ the estimate $\phi(\theta)\leq -(\gamma/d)|\theta|^2$ with some $\gamma>0$ holds. Then
\[
L_{1,2}(\varepsilon,t) \leq |x|^2\ e^{-\frac{\gamma}{d}|\rho_{\varepsilon}|^2\,t} \int\limits_{[-\pi t^{\frac{1}{\alpha}},\pi t^{\frac{1}{\alpha}}]^d \setminus \mathcal{O}\left(\rho_{\varepsilon}t^{\frac{1}{\alpha}}\right)} |\omega|^2\,d\omega \leq c_d(x)\ t^{\frac{d+2}{\alpha}}\,e^{-\frac{\gamma}{d}|\rho_{\varepsilon}|^2\,t},
\]
where
$c_d(x)=(2^d/3)d |x|^2\pi^{d+2}$. Hence the function $L_{1,2}(\varepsilon,t)$ decreases exponentially as $t\to\infty$.

Now we consider the function~$L_{1,1}(\varepsilon,t)$. From \eqref{eq2} it follows that
\begin{equation}\label{eq4}
Q(\varepsilon,1+\varepsilon,t) \leq L_{1,1}(\varepsilon,t) \leq Q(\varepsilon,1-\varepsilon,t)\quad\text{for~} t\geq0,
\end{equation}
where
\[
Q(\varepsilon,\mu,t) = \int\limits_{\mathcal{O}\left(\rho_{\varepsilon}t^{\frac{1}{\alpha}}\right)}\langle \omega ,x\rangle^2 e^{-\mu\,\eta\left(\frac{w}{|w|}\right)|\omega|^{\alpha}}\,d\omega.
\]
Note that for the function $Q(\varepsilon,\mu,t)$ the integration domain depends on $t$, but   the integrand does not. To find the $\lim_{t\to\infty}Q(\varepsilon,\mu,t)$, we switch to the generalized polar coordinates and use the designations $s(\varphi)$ and $\eta(\varphi)$, $\varphi\in\mathbf{R}^{d-1}$, respectively,
for the functions $\bigl\langle \frac{\omega}{|\omega|},x \bigr\rangle^2$ and $\eta\bigl(\frac{\omega}{|\omega|}\bigr)$, because they are independent of the polar radius. Then
\begin{multline*}
\lim_{t\to\infty}Q(\varepsilon,\mu,t) = \int\limits_{\mathbf{R}^d} \langle \omega/|\omega| ,x\rangle^2\,|\omega|^2\ e^{-\mu\,\eta\left(\frac{w}{|w|}\right)|\omega|^{\alpha}}\,d\omega \\
= \int_0^{\infty}\int_0^{2\pi}\int_0^{\pi}\cdots\int_0^{\pi} s(\varphi)\,r^2e^{-\mu\,\eta(\varphi)r^{\alpha}}r^{d-1}\ \sin\varphi_2\cdots(\sin\varphi_{d-1})^{d-2}\,dr\,d\varphi_1\cdots d\varphi_{d-1} \\
= \tilde \gamma_{d,\alpha}(x)\,\mu^{-\frac{d+2}{\alpha}}
\end{multline*}
where
\begin{multline*}
\tilde \gamma_{d,\alpha}(x)  = \frac{1}{\alpha}\,\Gamma\left(\frac{d+2}{\alpha}\right)\int_0^{2\pi}\int_0^{\pi}\cdots\int_0^{\pi}\frac{s(\varphi)}{(\eta(\varphi))^{\frac{d+2}{\alpha}}} \sin\varphi_2\cdots(\sin\varphi_{d-1})^{d-2}\,d\varphi_1\cdots d\varphi_{d-1} \\
= \int\limits_{\mathbf{R}^d}\langle \omega ,x\rangle^2 e^{-\eta\left(\frac{w}{|w|}\right)|\omega|^{\alpha}}\,d\omega.
\end{multline*}
From \eqref{eq4} it follows
\[
\tilde \gamma_{d,\alpha}(x)\,(1+\varepsilon)^{-\frac{d+2}{\alpha}} \leq \lim_{t\to\infty}L_{1,1}(\varepsilon,t) \leq \tilde \gamma_{d,\alpha}(x)\,(1-\varepsilon)^{-\frac{d+2}{\alpha}},
\]
and, due to arbitrariness of $\varepsilon$, we have $\lim_{t\to\infty}L_{1,1}(\varepsilon,t) = \tilde \gamma_{d,\alpha}(x)$.
Thus
\[
L_1(t) \sim \tilde \gamma_{d,\alpha}(x),\quad  t\to\infty.
\]

It remains to consider the estimate
\[
L_2(t) \leq c\,|x|^3 t^{-\frac{1}{\alpha}}\int\limits_{[-\pi t^{\frac{1}{\alpha}},\pi t^{\frac{1}{\alpha}}]^d}|\omega|^3\,e^{\phi(t^{-\frac{1}{\alpha}}\omega)t}\,d\omega,
\]
therefore, the function $L_2(t)$ decreases faster than the function $L_1(t)$ for $t\to\infty$.

Then from the representation of $p(t,0,0)-p(t,x,0)$ in the form of~\eqref{eq1}, we find the asymptotics~\eqref{eq2114}.
\end{proof}

Let us prove the following lemma about properties of the infinitesimal generation function.

\begin{lemma}\label{f_nonneg}
Let $b_n\geq0$ for $n\not=1$, $b_1<0$ and $\sum_nb_n=0$, $\sum_nnb_n<\infty$. Assume that, for each $r>1$, the derivatives $f^{(r)}(u)$ for $u=1$ are defined and finite.
Then
\begin{itemize}
\item[\rm(i)]  if $\beta:=f'(1)\leq 0$, then  the function $f(u)$ is strictly positive for  $u\in[0,1)$ and $f(1)=0$, the function $f'(u)$ is non-positive for all $u\in[0,1]$,  and the functions $f^{(r)}(u)$, $r\geq2$, are  non-negative for the same values of $u$;
\item[\rm(ii)]  if $\beta>0$, then there exists $u_{*}\in[0,1)$ such that the function $f(u)$ is strictly positive for $u\in(0,u_{*})$ (if $u_{*}>0$), it is strictly negative for $u\in(u_{*},1)$ and $f(u_{*})=f(1)=0$, the function $f'(u)$ has exactly one sign change on the interval $u\in[0,1]$.
\end{itemize}
\end{lemma}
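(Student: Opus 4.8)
The engine of the whole argument will be the sign of the higher derivatives, so first I would record the foundational facts. Since $b_n\ge0$ for $n\ge2$ and $\sum_n nb_n=\beta<\infty$, the series for $f$, $f'$ and $f''$ converge absolutely on $[0,1]$, hence $f$ is continuous there and is a convergent power series on $(-1,1)$, with $f(1)=\sum_n b_n=0$. Differentiating term by term gives, for $r\ge2$,
\[
f^{(r)}(u)=\sum_{n\ge r}n(n-1)\cdots(n-r+1)\,b_n\,u^{n-r},
\]
and the key point is that the $b_1$-term disappears (its falling factorial vanishes), so every surviving summand is non-negative for $u\in[0,1]$. This immediately yields $f^{(r)}\ge0$ on $[0,1]$ for all $r\ge2$, which is already the last clause of (i), and it shows that $f$ is convex and $f'$ is non-decreasing on $[0,1]$, with boundary values $f'(0)=b_1<0$ and $f'(1)=\beta$.

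For part (i) I would argue purely from monotonicity. With $\beta\le0$, the fact that $f'$ is non-decreasing and $f'(1)=\beta\le0$ forces $f'\le0$ throughout $[0,1]$, so $f$ is non-increasing and $f\ge f(1)=0$. The remaining step, strict positivity on $[0,1)$, is where a little care is needed: if $f$ vanished at some $u_0\in[0,1)$, then monotonicity together with $f\ge0$ would force $f\equiv0$ on the whole interval $[u_0,1]$, and then the fact that a nonzero power series cannot vanish on an interval contradicts $b_1<0$. This gives $f(u)>0$ for $u\in[0,1)$ and completes (i).

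For part (ii) I would first pin down the shape of $f'$. Because $\sum_{n\ge2}n b_n=\beta-b_1>0$, at least one $b_n$ with $n\ge2$ is strictly positive, so $f''>0$ on $(0,1)$ and $f'$ is strictly increasing on $[0,1]$; since $f'(0)=b_1<0<\beta=f'(1)$, there is a unique $u_1\in(0,1)$ with $f'(u_1)=0$, with $f'<0$ on $[0,u_1)$ and $f'>0$ on $(u_1,1]$, i.e.\ exactly one sign change. Hence $f$ is strictly decreasing on $[0,u_1]$ and strictly increasing on $[u_1,1]$, so in particular $f(u_1)<f(1)=0$ and $f<0$ on $[u_1,1)$. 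To locate the zero $u_*$ of $f$ itself I would note that on $[0,u_1]$ the function drops strictly from $f(0)=b_0\ge0$ to $f(u_1)<0$: either $b_0>0$, and the intermediate value theorem gives a unique $u_*\in(0,u_1)$ with $f(u_*)=0$, or $b_0=0$, and one sets $u_*=0$. In both cases, combining with $f<0$ on $[u_1,1)$ and $f(1)=0$ shows that the only zeros of $f$ in $[0,1]$ are $u_*\in[0,1)$ and $1$, with $f>0$ on $(0,u_*)$ and $f<0$ on $(u_*,1)$, as claimed.

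The main obstacle I anticipate is not the convexity bookkeeping but the strictness claims: strict positivity in (i) and the uniqueness statements in (ii). These do not follow from the weak inequalities alone, and I expect to need either the analyticity of $f$ (to exclude vanishing on an interval) or, equivalently, the observation that $f''>0$ on $(0,1)$ forces $f'$ to be genuinely strictly monotone. Everything else reduces to termwise differentiation and sign-chasing.
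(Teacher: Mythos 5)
Your proof is correct, but it runs along a different track than the paper's. The paper's entire argument is the explicit decomposition $f(u) = -\beta(1-u) + \sum_{n\geq2} b_n g_n(u)$ with $g_n(u) = u^n - nu + n - 1$, after which everything reduces to the elementary sign facts $g_n \geq 0$, $g_n' \leq 0$, $g_n^{(r)} \geq 0$ on $[0,1]$; the conclusions of both (i) and (ii) are then read off from this representation (rather tersely --- part (ii) is not spelled out at all). You instead differentiate the series termwise, observe that the $b_1$-term drops out for $r \geq 2$, and then argue via convexity, the endpoint values $f'(0) = b_1 < 0$ and $f'(1) = \beta$, the intermediate value theorem, and the identity theorem for power series. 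Each route has an advantage: the paper's decomposition yields strict positivity in (i) without any appeal to analyticity (for $\beta < 0$ the linear term $-\beta(1-u)$ is already strictly positive on $[0,1)$, and for $\beta = 0$ one uses that $\sum_{n\geq2} n b_n = -b_1 > 0$ forces some $g_{n_0}$-term with $b_{n_0} > 0$, $g_{n_0}(u)>0$ on $[0,1)$), whereas your argument is more complete precisely where the paper is sketchy, namely the strictness and uniqueness claims in (ii), which you establish carefully via strict convexity of $f$ on $(0,1)$. One small slip worth noting: absolute convergence of the series for $f''$ on all of $[0,1]$ does not follow from $\sum_n n b_n < \infty$ alone (take $b_n \asymp n^{-5/2}$); it is the lemma's standing hypothesis that $f^{(r)}(1)$ is defined and finite, combined with $b_n \geq 0$ for $n \geq 2$ and monotone convergence, that justifies it --- harmless, since that hypothesis is in force, but the attribution should be corrected.
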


\begin{proof}
Note that $\beta=\sum_nnb_n$.
Then
\[
b_0 = -b_1 - \sum_{n=2}^{\infty}b_n = -\beta +\sum_{n=2}^{\infty}nb_n - \sum_{n=2}^{\infty}b_n = -\beta + \sum_{n=2}^{\infty}(n-1)b_n,
\]
and therefore
\begin{equation}\label{f_beta}
f(u) = -\beta(1-u) + \sum_{n=2}^{\infty}b_ng_n(u),
\end{equation}
where $g_n(u):=u^n-nu+n-1$. For each $n\geq2$ and $u\in[0,1]$ the following estimates
\begin{align*}
g_n(u) &= (1-u)\left(n-\sum_{k=0}^{n-1}u^k\right)\geq0,\\
g'_n(u) &= n (u^{n-1}-1)\leq0,\\
g_n^{(r)}(u) &=  \frac{n\,!}{(n-r)\,!}\,b_nu^{n-r}\geq0,\quad r\geq2,
\end{align*}
are valid. Hence, taking into account that $\beta\leq0$, from~\eqref{f_beta} we obtain the statements of Lemma~\ref{f_nonneg}.
\end{proof}

Let us present a statement about the properties of the Laplace generating function $F(z,t,x)$, which complements the statement of Lemma~2 from~\citep{Y09DM:e}.

\begin{lemma}\label{Fmonot}
The function $F(z,t,x)$ is monotone in $t$ for fixed $z\geq0$ and $x\in\mathbf{Z}^d$: it does not decrease if $f(e^{-z})>0$; it does not increase if $f(e^{-z})<0$; $F(z,t,x)\equiv e^{-z}$ if $f(e^{-z})=0$. Consequently, the probability $Q(t,x)$ of the continuation of the process does not increase in $t$ for any fixed $x\in\mathbf{Z}^d$.
\end{lemma}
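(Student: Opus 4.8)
The plan is to work from the evolution equation that defines $F$. Writing $F(z,t,x)=\mathnormal{E}_x\!\left[e^{-z\mu_t}\right]$, the cited Lemma~2 of \citep{Y09DM:e} (equivalently, the equation underlying \eqref{E:Q}) gives
\[
\frac{\partial F}{\partial t}(z,t,x) = (\mathcal{A}F)(z,t,\cdot)(x) + \delta_0(x)\,f\bigl(F(z,t,0)\bigr),\qquad F(z,0,x)=e^{-z},
\]
where $\delta_0(x)=1$ for $x=0$ and $0$ otherwise, and $\mathcal{A}$ acts on the spatial variable; integrating against the semigroup $p(t,x,0)$ recovers exactly the convolution equation \eqref{E:Q} in the limit $z\to\infty$. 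Since $\mathcal{A}$ annihilates constants ($\sum_y a(x,y)=0$), the constant field $F\equiv e^{-z}$ solves this equation precisely when $f(e^{-z})=0$; as $\mathcal{A}$ is a bounded operator ($-a(0)=\sum_{x\neq0}a(x)<\infty$), the solution is unique, which immediately yields the identity $F(z,t,x)\equiv e^{-z}$ in that case.

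For the two monotone regimes I would differentiate the equation in $t$ and set $w:=\partial_t F$. Then $w$ solves the \emph{linear} equation
\[
\frac{\partial w}{\partial t}(z,t,x) = (\mathcal{A}w)(z,t,\cdot)(x) + \delta_0(x)\,f'\bigl(F(z,t,0)\bigr)\,w(z,t,0),
\]
with initial data $w(z,0,x)=\partial_t F(z,0,x)=\delta_0(x)\,f(e^{-z})$, using again $\mathcal{A}e^{-z}=0$. Thus at $t=0$ the field $w(z,0,\cdot)$ is supported at the origin and carries the sign of $f(e^{-z})$. The right-hand side has the form $(\mathcal{A}+V_t)w$ with the time-dependent, origin-supported diagonal potential $V_t(x)=\delta_0(x)f'(F(z,t,0))$, which is bounded because $F\in(0,1]$ and $f'$ is continuous on $[0,1]$ by hypothesis.

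The heart of the argument is a comparison (positivity-preservation) principle for this linear evolution. The operator $\mathcal{A}+V_t$ has nonnegative off-diagonal entries, $a(x,y)\geq0$ for $x\neq y$, i.e.\ it is quasi-monotone, and both $\mathcal{A}$ and $V_t$ are bounded below on the diagonal uniformly in $t$. Hence for a large enough constant $c$ the shifted operator $\mathcal{A}+V_t+cI$ is entrywise nonnegative, and the time-ordered exponential solving the evolution equals $e^{-ct}$ times a Dyson series all of whose terms are entrywise nonnegative. Consequently the evolution maps the nonnegative cone into itself: $w(z,0,\cdot)\geq0$ forces $w(z,t,\cdot)\geq0$ for all $t\geq0$, and likewise for $\leq0$. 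Therefore $\partial_t F\geq0$ when $f(e^{-z})>0$ and $\partial_t F\leq0$ when $f(e^{-z})<0$, which is the monotonicity assertion. I expect the main obstacle to be the rigorous bookkeeping on the infinite lattice: checking that $F$ is $C^1$ in $t$ so that $w$ genuinely solves the linearized equation, and justifying positivity preservation for the \emph{signed} localized potential. Both are tractable precisely because $\mathcal{A}$ is a bounded generator, so the whole semigroup theory collapses to entrywise-nonnegative matrix exponentials.

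Finally, for the consequence about $Q(t,x)$ I would pass to the limit $z\to\infty$. Since $e^{-z\mu_t}\to\mathbf{1}\{\mu_t=0\}$, we have $\lim_{z\to\infty}F(z,t,x)=P_x\{\mu_t=0\}=1-Q(t,x)$. For all sufficiently large $z$ the value $e^{-z}$ lies near $0$, where $f>0$ by Lemma~\ref{f_nonneg} (on all of $[0,1)$ in case (i); on $(0,u_{*})$ once $e^{-z}<u_{*}$ in case (ii)); hence $F(z,\cdot,x)$ is non-decreasing in $t$ for every such $z$. Taking the pointwise limit $z\to\infty$ preserves monotonicity, so $1-Q(t,x)$ is non-decreasing and therefore $Q(t,x)$ is non-increasing in $t$, as claimed. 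As a sanity check this matches the purely probabilistic fact that extinction is absorbing, so the event $\{\mu_t=0\}$ is nondecreasing in $t$.
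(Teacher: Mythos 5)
Your proposal is correct and is essentially the paper's own argument: the paper proves the lemma in one line by citing the theorem on positive solutions of differential equations in $\ell^{\infty}(\mathbf{Z}^d)$ with off-diagonal positive right-hand side \citep{MAK:e}, which is precisely the positivity-preservation (quasi-monotonicity) principle you establish by hand — linearizing at $w=\partial_t F$, shifting by $cI$ so that $\mathcal{A}+V_t+cI$ is entrywise nonnegative, and expanding the time-ordered exponential into a nonnegative Dyson series. One micro-patch for the final claim about $Q$: in case (ii) of Lemma~\ref{f_nonneg} with $u_{*}=0$ (equivalently $b_0=0$) there is no $z$ with $f(e^{-z})>0$, so your large-$z$ monotone-limit argument is unavailable in that sub-case; but there extinction is impossible, hence $1-Q\equiv 0$, and in any event your closing observation that extinction is absorbing already yields the monotonicity of $Q(t,x)$ directly in all cases.
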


The proof of this lemma is based on the theorem on positive solutions for differential equations in $\ell^{\infty}(\mathbf{Z}^d)$ with off-diagonal positive right-hand side (see, e.g., \citep{MAK:e}).

Let us generalize the assertion of Lemma~9 from~\citep{Y09DM:e} using an appropriate proof scheme, for the case of the recurrent BRWs with the branching source intensity $\beta\leq0$ and with no assumptions on the variance of jumps.

\begin{lemma}\label{CrSubcr}
If BRW is recurrent, that is $G_0=\infty$, and $\beta\leq0$, then
\[
\lim_{t\to\infty}F(z,t,x)=1,\qquad \lim_{t\to\infty}Q(t,x)=0.
\]
\end{lemma}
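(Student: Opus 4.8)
The plan is to analyze the asymptotic behavior of the survival probability equation~\eqref{E:Q} directly, exploiting the recurrence condition $G_0=\infty$ and the sign structure of $f$ established in Lemma~\ref{f_nonneg}. Since $\beta\leq0$, part~(i) of Lemma~\ref{f_nonneg} tells us that $f(u)\geq0$ on $[0,1]$ with $f(1)=0$. First I would invoke Lemma~\ref{Fmonot}: because $Q(t,x)$ is non-increasing in $t$ and bounded below by $0$, the limit $Q_\infty(x):=\lim_{t\to\infty}Q(t,x)$ exists for each $x$. The goal then reduces to showing this limit is $0$, and by the relation $F(z,t,x)=1$ corresponds to $z=0$ (i.e. $e^{-z}=1$), the statement $\lim F(z,t,x)=1$ should follow from the companion fact about $Q$ via the same monotonicity argument applied to $F$, which does not increase when $f(e^{-z})<0$ and equals $e^{-z}$ when $f(e^{-z})=0$.

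Second, I would argue by contradiction: suppose $Q_\infty:=Q_\infty(0)>0$. Passing to the limit in equation~\eqref{E:Q}, and using that $Q(s)\to Q_\infty$ together with continuity of $f$, the convolution $\int_0^t p(t-s)f(1-Q(s))\,ds$ should be comparable to $f(1-Q_\infty)\int_0^t p(t-s)\,ds$ for large $t$. Here the recurrence assumption enters decisively: $G_0=\int_0^\infty p(s)\,ds=\infty$ means $\int_0^t p(t-s)\,ds=\int_0^t p(u)\,du\to\infty$ as $t\to\infty$. If $f(1-Q_\infty)>0$ --- which holds by Lemma~\ref{f_nonneg}(i) whenever $1-Q_\infty\in[0,1)$, i.e. $Q_\infty>0$ --- then the integral term diverges to $+\infty$, forcing the right-hand side of~\eqref{E:Q} to tend to $-\infty$. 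This contradicts $Q(t)\geq0$, so we must have $Q_\infty=0$. The same divergence argument extends from $Q(t)=Q(t,0)$ to general $Q(t,x)$ by relating $Q(t,x)$ to the transition kernel $p(t,x,0)$, whose tail integral also diverges under recurrence.

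The main obstacle is making the heuristic replacement of $f(1-Q(s))$ by the constant $f(1-Q_\infty)$ rigorous inside the convolution, since $p(t-s)$ weights all of $[0,t]$ and one cannot naively pull out a limiting value. The clean way is a Tauberian or lower-bound argument: since $Q$ is non-increasing, for any $\delta>0$ there exists $T$ with $Q(s)\leq Q_\infty+\delta$ for $s\geq T$, hence $f(1-Q(s))\geq f(1-Q_\infty-\delta)>0$ on $[T,t]$ (using that $f$ is decreasing and strictly positive on $[0,1)$ by the sign analysis, so smaller arguments of $1-Q$ give larger $f$). This yields
\[
\int_0^t p(t-s)f(1-Q(s))\,ds \;\geq\; f(1-Q_\infty-\delta)\int_0^{t-T} p(u)\,du,
\]
whose right-hand side $\to\infty$ by recurrence, delivering the contradiction without any delicate interchange of limits. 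I would verify that $1-Q_\infty-\delta$ can be kept in $[0,1)$ by choosing $\delta$ small, so that strict positivity of $f$ applies, and finally read off both limits, noting that $Q(t,x)\to0$ is equivalent to $F(0,t,x)\to1$ under the normalization $F(z,t,x)=E_x e^{-z\mu_t}$ evaluated through the monotonicity in Lemma~\ref{Fmonot}.
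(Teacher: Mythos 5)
Your core argument for $x=0$ is correct and takes a genuinely different, more elementary route than the paper. The paper works with the Laplace generating function $F(z,t,x)$: it gets monotone convergence of $F$ from Lemmas~\ref{f_nonneg} and~\ref{Fmonot}, passes to the limit in the evolution equation for $F$, and invokes the expansion of $f(F(z,t,0))$ near $u=1$ (Lemma~2 of \citep{Y10-CS:e}) to see that the integrand has a finite \emph{nonzero} limit, so that $G_0=\infty$ forces the right-hand side to be infinite while the left-hand side stays finite. You instead run the contradiction directly in equation~\eqref{E:Q} for $Q(t)=Q(t,0)$, using only the sign and monotonicity structure of Lemma~\ref{f_nonneg}; this avoids the Taylor-type asymptotics of $f$ entirely, and your order of deduction is reversed relative to the paper ($Q\to0$ first, then $F\to1$ via $1-F(z,t,x)=E_x\left(1-e^{-z\mu_t}\right)\leq Q(t,x)$, rather than $F\to1$ for every $z>0$ first and then $Q\to0$). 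One slip: your $\delta$-step applies the monotonicity of $f$ in the wrong direction --- from $Q(s)\leq Q_\infty+\delta$ and $f'\leq0$ you get $f(1-Q(s))\leq f(1-Q_\infty-\delta)$, an \emph{upper} bound. The fix is immediate and removes $\delta$ and $T$ altogether: since $Q$ is non-increasing, $Q(s)\geq Q_\infty$ for all $s$, hence $1-Q(s)\leq 1-Q_\infty<1$ and $f(1-Q(s))\geq f(1-Q_\infty)>0$ by Lemma~\ref{f_nonneg}(i), so $\int_0^t p(t-s)f(1-Q(s))\,ds\geq f(1-Q_\infty)\int_0^t p(u)\,du\to\infty$, contradicting $Q(t)\geq0$.

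The genuine gap is the extension to arbitrary $x$. The equation for general $x$ (see~\eqref{eqQtotal}) is $Q(t,x)=1-\int_0^t p(t-s,x,0)f(1-Q(s,0))\,ds$: the integrand involves $Q(s,0)$, not $Q(s,x)$. Assuming $Q_\infty(x)>0$ for some $x\neq0$ therefore gives no positive lower bound on $f(1-Q(s,0))$; and after your first step you already know $Q(s,0)\to0$, so $f(1-Q(s,0))\to f(1)=0$ and the divergence of $\int_0^t p(u,x,0)\,du$ is useless --- you face an indeterminate form $\infty\cdot0$ and must show the convolution tends exactly to $1$. So ``the same divergence argument'' does not extend; a separate idea is needed. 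Two workable options: (a) the strong Markov property at the first hitting time $\tau$ of the source,
\[
Q(t,x)=P_x\{\tau>t\}+\int_0^t Q(t-s,0)\,P_x\{\tau\in ds\},
\]
which tends to $0$ because recurrence and irreducibility give $P_x\{\tau<\infty\}=1$ while $Q(\cdot,0)$ is monotone with limit $0$ (split the integral at $t/2$); or (b) write $p(t-s,x,0)=p(t-s,0,0)-\bigl(p(t-s,0,0)-p(t-s,x,0)\bigr)$, note the difference is nonnegative and integrable in time (exactly what Theorem~1 of the paper supplies in the heavy-tailed case, with the analogous $t^{-(d+2)/2}$ estimate under~\eqref{E:findisp}), and apply Lemma~\ref{Theorem547} with $\varphi(s)=f(1-Q(s,0))\to0$. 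In fairness, the paper's own writeup also delegates the general-$x$ step to ``the reasoning of Lemma~9'' of \citep{Y09DM:e}; but the explicit mechanism you state for it is the one step in your proposal that fails as written.
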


\begin{proof}
By Lemma~\ref{f_nonneg} the condition $\beta\leq0$ implies $f(e^{-z})>0$ for each fixed $z>0$ and $f(e^{-z})=0$ for $z=0$. Hence, by Lemma~\ref{Fmonot} the function $F(z,t,x)$ does not decrease in $t$ for each fixed $z>0$ and $F(z,t,x)\equiv1$ for $z=0$.
Then, according to the reasoning of Lemma~9 in~\citep{Y09DM:e}, there exists $c_0(z,x)\geq0$ such that $\lim_{t\to\infty}F(z,t,x)=1-c_0(z,x)\leq1$. Note  that $F(0,t,x)\equiv1$. Assume $c_0(z,x)>0$ for $z>0$,  then we get
\begin{equation}
\label{eqc0}
1 - c_0(z,x) - e^{-z} = \lim_{t\to\infty}\int_0^tp(t-s,x,0)f(F(z,s,0))\,ds.
\end{equation}
Now, from Lemma~2 in~\citep{Y10-CS:e} it follows that
\[
f(F(z,t,0)) =
    \begin{cases}
        \frac{f''(1)}{2}\,c_0^2(z,0)+o(t) & \text{for}\ \beta=0, \\
        -\beta\,c_0(z,0)+o(t) & \text{for}\ \beta<0,
     \end{cases}
\]
as $t\to\infty$, and therefore the function $f(F(z,t,0))$  for each fixed $z>0$ has a finite nonzero limit as $t\to\infty$. Then the left-hand side of~\eqref{eqc0} is finite, whereas according to $G_0=\infty$, the term in the right-hand side is infinite. Hence, $c_0(z,x)\equiv0$ and then $\lim_{t\to\infty}Q(t,x)=0$.
\end{proof}

Under condition \eqref{E:findisp} the proof of the above lemma for dimensions $d=1,2$ is given in Lemma~3 of~\citep{Y10-CS:e}, whereas for dimensions $d\geq3$ the appropriate result was only announced. Let us give a proof of its generalization to the case of the transient
BRWs.

Previously we prove the following statement.

\begin{lemma}
\label{Theorem547}
Assume the functions $\varphi(t)$ and $\chi(t)$ are continuous for $t\geq0$, $\varphi(t)\to\varphi_0$ as $t\to\infty$, $\chi(t)\geq0$, $\chi(t)\not\equiv0$, and
\begin{equation}
\frac{\int_{t-T}^{t}\chi(s)\,ds}{\int_0^t\chi(s)\,ds}\to0\quad \text{as}\ t\to\infty\label{eq5123}
\end{equation}
for sufficiently large $T$. Then
\[
\frac{\int_{0}^{t}\varphi(t-s)\chi(s)\,ds}{\int_0^t\chi(s)\,ds}\to\varphi_0\quad \text{as}\ t\to\infty.
\]
In particular, the limit relation~\eqref{eq5123} holds if $\int_0^{\infty}\chi(s)\,ds<\infty$.
\end{lemma}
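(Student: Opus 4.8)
The plan is to read the quotient as a weighted average of the values $\varphi(t-s)$ and to exploit the fact that the weight $\chi$ puts asymptotically negligible mass on the short window $s\in[t-T,t]$, which is exactly the region where $t-s$ is small and $\varphi(t-s)$ need not be close to $\varphi_0$. Since $\chi\geq0$ is continuous and $\chi\not\equiv0$, I would first record that $\int_0^t\chi(s)\,ds$ is non-decreasing and strictly positive for all sufficiently large $t$, so every quotient below is well defined. Subtracting the constant $\varphi_0$ and using $\int_0^t\chi(s)\,ds$ as a common denominator, I would write
\[
\frac{\int_0^t\varphi(t-s)\chi(s)\,ds}{\int_0^t\chi(s)\,ds}-\varphi_0=\frac{\int_0^t\bigl(\varphi(t-s)-\varphi_0\bigr)\chi(s)\,ds}{\int_0^t\chi(s)\,ds},
\]
which reduces the claim to showing that the right-hand side tends to $0$.

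Next I fix $\varepsilon>0$ and use the convergence $\varphi(u)\to\varphi_0$ to pick a threshold $T$, taken large enough that simultaneously $|\varphi(u)-\varphi_0|<\varepsilon$ for all $u\geq T$ and the hypothesis~\eqref{eq5123} holds for this $T$. I also note that a continuous function with a finite limit is bounded, so $M:=\sup_{u\geq0}|\varphi(u)-\varphi_0|<\infty$. Splitting the numerator at $s=t-T$ (for $t>T$) produces a \emph{far} part over $[0,t-T]$, where $t-s\geq T$ and hence $|\varphi(t-s)-\varphi_0|<\varepsilon$, and a \emph{near} part over $[t-T,t]$, where I only use the crude bound $M$.

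For the far part I would estimate
\[
\left|\frac{\int_0^{t-T}\bigl(\varphi(t-s)-\varphi_0\bigr)\chi(s)\,ds}{\int_0^t\chi(s)\,ds}\right|\leq\varepsilon\,\frac{\int_0^{t-T}\chi(s)\,ds}{\int_0^t\chi(s)\,ds}\leq\varepsilon,
\]
while for the near part the weight hypothesis gives
\[
\left|\frac{\int_{t-T}^t\bigl(\varphi(t-s)-\varphi_0\bigr)\chi(s)\,ds}{\int_0^t\chi(s)\,ds}\right|\leq M\,\frac{\int_{t-T}^t\chi(s)\,ds}{\int_0^t\chi(s)\,ds},
\]
which tends to $0$ as $t\to\infty$ by~\eqref{eq5123}. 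Combining the two bounds yields $\limsup_{t\to\infty}$ of the absolute value of the difference at most $\varepsilon$, and letting $\varepsilon\downarrow0$ gives the claim. The main (and essentially only) obstacle is coordinating the single choice of $T$ so that it is large enough both for the closeness of $\varphi$ and for \eqref{eq5123}; this is harmless, since the convergence of $\varphi$ lets me enlarge $T$ freely while \eqref{eq5123} is available for all sufficiently large $T$. Once $T$ is pinned down, both estimates are immediate.

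Finally, for the \emph{in particular} assertion I would argue that when $\int_0^\infty\chi(s)\,ds<\infty$ the conditions $\chi\geq0$, $\chi\not\equiv0$ force $\int_0^\infty\chi(s)\,ds\in(0,\infty)$, so $\int_0^t\chi(s)\,ds\to\int_0^\infty\chi(s)\,ds>0$ while $\int_{t-T}^t\chi(s)\,ds=\int_0^t\chi(s)\,ds-\int_0^{t-T}\chi(s)\,ds\to0$ as a tail of a convergent integral; hence \eqref{eq5123} holds for every $T$, verifying the hypothesis.
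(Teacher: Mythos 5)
Your proof is correct and follows essentially the same route as the paper: subtract $\varphi_0$, split the integral at $s=t-T$ into a far part bounded by $\varepsilon$ (using $|\varphi(u)-\varphi_0|<\varepsilon$ for $u\geq T$) and a near part bounded by $\sup_t|\varphi(t)-\varphi_0|$ times the ratio in~\eqref{eq5123}, then let $\varepsilon\downarrow0$ via a $\limsup$ argument. The only additions beyond the paper's text are your explicit justification of the boundedness of $\varphi$ and your verification of the \emph{in particular} clause (which the paper states without proof); both are correct and harmless.
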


\begin{proof}
Let us fix some $\varepsilon>0$. Choose a $T$ such that $|\varphi(t)-\varphi_0|\leq\varepsilon$ for $t\geq T$.  Note  that $\int_0^t\chi(s)\,ds>0$ for all sufficiently large $t$, and for those $t$ the following function is defined
\[
W(t) := \frac{\int_0^t\varphi(t-s)\chi(s)\,ds}{\int_0^t\chi(s)\,ds} - \varphi_0 = \frac{\int_0^t(\varphi(t-s)-\varphi_0)\chi(s)\,ds}{\int_0^t\chi(s)\,ds}.
\]
Represent $W(t)$ as the sum of $W_1(t)$ and $W_2(t)$, where
\[
W_1(t):=\frac{\int_0^{t-T}(\varphi(t-s)-\varphi_0)\chi(s)\,ds}{\int_0^t\chi(s)\,ds},\qquad
W_2(t):=\frac{\int_{t-T}^{t}(\varphi(t-s)-\varphi_0)\chi(s)\,ds}{\int_0^t\chi(s)\,ds}.
\]
According to the choice of $T$ we have $|W_1(t)| \leq \varepsilon$. Also, the estimate is valid
\[
|W_2(t)|\leq \sup_{t\geq0}|\varphi(t)-\varphi_0|\cdot\frac{\int_{t-T}^{t}\chi(s)\,ds}{\int_0^t\chi(s)\,ds},
\]
and, by the lemma conditions, $W_2(t)\to0$ as $t\to\infty$. Hence, $\limsup_{t\to\infty}|W(t)|\leq \varepsilon$, whence, due to the arbitrariness of $\varepsilon$, the assertion of the lemma follows.
\end{proof}

Now we will generalize the result of Lemma~3 from~\citep{Y10-CS:e}  for the case of transient BRWs on $\mathbf{Z}^d, d\geq3$, without any assumptions on the variance of jumps and on the branching source intensity $\beta$.

\begin{lemma}
\label{limQfin}
For a transient BRW, that is $G_0<\infty$, for every $z\geq0$ and $x\in\mathbf{Z}^d$ the following equalities hold
\[
\lim_{t\to\infty}F(z,t,x)=1-c_d(x),\qquad \lim_{t\to\infty}Q(t,x)= c_d(x),
\]
where $\lim_{z\to\infty}c_d(z,x) = c_d(x)>0$ and $ c_d(z,x)$ is the least non-negative root of the equation
\begin{equation}\label{Sk29}
1 - c_d(z,x) - e^{-z} = G_0(x,0)f(1-c_d(z,0)).
\end{equation}
\end{lemma}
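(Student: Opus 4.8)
The plan is to follow the scheme of Lemma~\ref{CrSubcr} and of Lemma~3 in~\citep{Y10-CS:e}, but to replace the divergence argument used there for the recurrent case by a \emph{convergence} argument based on transience, $\int_0^\infty p(s,x,0)\,ds=G_0(x,0)<\infty$, together with the convolution statement of Lemma~\ref{Theorem547}. Fix $z\ge 0$ and $x\in\mathbf{Z}^d$. By Lemma~\ref{Fmonot} the map $t\mapsto F(z,t,x)$ is monotone, and as a Laplace generating function it is bounded in $[0,1]$; hence $\lim_{t\to\infty}F(z,t,x)$ exists and I may define $c_d(z,x)\in[0,1]$ by $\lim_{t\to\infty}F(z,t,x)=1-c_d(z,x)$. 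The function $F$ satisfies the integral equation
\[
F(z,t,x)=e^{-z}+\int_0^t p(t-s,x,0)\,f\bigl(F(z,s,0)\bigr)\,ds
\]
already used in~\eqref{eqc0}.

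To obtain~\eqref{Sk29} I would pass to the limit $t\to\infty$ in this equation. Put $\chi(s):=p(s,x,0)$ and $\varphi(s):=f(F(z,s,0))$; then $\chi\ge0$, $\int_0^\infty\chi(s)\,ds=G_0(x,0)<\infty$, and by continuity of $f$ together with the already established convergence $F(z,s,0)\to 1-c_d(z,0)$ one has $\varphi(s)\to f(1-c_d(z,0))$. Since $\int_0^\infty\chi<\infty$ forces the hypothesis~\eqref{eq5123}, Lemma~\ref{Theorem547} applied to the convolution $\int_0^t\varphi(t-s)\chi(s)\,ds$ gives
\[
\lim_{t\to\infty}\int_0^t p(t-s,x,0)\,f\bigl(F(z,s,0)\bigr)\,ds=G_0(x,0)\,f(1-c_d(z,0)),
\]
and letting $t\to\infty$ in the integral equation yields exactly~\eqref{Sk29}. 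That $c_d(z,x)$ is the \emph{least} non-negative root I would argue, as in~\citep{Y10-CS:e}, from the monotone time-dynamics: $F$ starts at $F(z,0,x)=e^{-z}$ and, by the order-preserving structure of the integral equation underlying Lemma~\ref{Fmonot}, any non-negative solution of~\eqref{Sk29} is a stationary barrier that the monotone trajectory cannot cross, so its limit is the root selected by the monotone iteration, i.e. the least non-negative $c_d(z,x)$.

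Finally I would treat $Q$ and the passage $z\to\infty$. Sending $z\to\infty$ in the integral equation (dominated convergence, $F(z,s,0)\to 1-Q(s,0)$) gives the generalization of~\eqref{E:Q}, namely $Q(t,x)=1-\int_0^t p(t-s,x,0)f(1-Q(s,0))\,ds$; since $Q(t,x)$ is non-increasing in $t$ by Lemma~\ref{Fmonot} and bounded, $\lim_{t\to\infty}Q(t,x)=:c_d(x)$ exists, and applying Lemma~\ref{Theorem547} exactly as above gives $1-c_d(x)=G_0(x,0)f(1-c_d(0))$. The identity $c_d(x)=\lim_{z\to\infty}c_d(z,x)$ follows from interchanging the limits in $t$ and $z$ for $F(z,t,x)$, which is monotone in each argument (non-increasing in $z$, with $F(z,t,x)\to 1-Q(t,x)$ as $z\to\infty$); the iterated limits agree by the monotone (Moore--Osgood) interchange. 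Positivity $c_d(x)>0$ then drops out: for $x=0$ the relation reads $1-c_d(0)=G_0\,f(1-c_d(0))$, and since $f(1)=\sum_nb_n=0$ the value $1$ is not a root, so $c_d(0)>0$ and $G_0 f(1-c_d(0))=1-c_d(0)<1$; using the pointwise bound $p(t,x,0)\le p(t,0,0)$ (whence $G_0(x,0)\le G_0$) and $f(1-c_d(0))\ge0$, I conclude $c_d(x)=1-G_0(x,0)f(1-c_d(0))\ge 1-G_0 f(1-c_d(0))=c_d(0)>0$.

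I expect the main obstacle to be the two selection/interchange issues rather than any estimate: first, showing that the monotone time-dynamics of $F$ singles out precisely the \emph{least} non-negative root of~\eqref{Sk29}; and second, justifying the interchange of the $t\to\infty$ and $z\to\infty$ limits that identifies $\lim_{t\to\infty}Q(t,x)$ with $\lim_{z\to\infty}c_d(z,x)$. Both rely only on the order-preserving structure of the integral equation and on the monotonicity supplied by Lemma~\ref{Fmonot}, so no additional analytic estimates beyond Lemma~\ref{Theorem547} should be needed.
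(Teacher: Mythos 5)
Your proposal follows essentially the same route as the paper's proof: the integral equation \eqref{Feq} for $F$, monotonicity from Lemma~\ref{Fmonot}, passage to the limit $t\to\infty$ via Lemma~\ref{Theorem547} with $\chi(s)=p(s,x,0)$ integrable by transience, and positivity via $c_d(z,0)>0$ together with the comparison $G_0(x,0)\le G_0$. The extra steps you supply (the least-root selection and the $z\to\infty$ interchange) concern points the paper's proof leaves implicit and are sound in substance, with one small caveat: Moore--Osgood requires uniform convergence in one variable rather than mere double monotonicity, which here is immediate from the uniform bound $0\le Q(t,x)-\bigl(1-F(z,t,x)\bigr)\le e^{-z}$.
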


\begin{proof}
The function $F(z,t,x)$ satisfies the equality
\begin{equation}\label{Feq}
F(z,t,x) = e^{-z} + \int_0^tp(t-s,x,0)f(F(z;s,0))\,ds
\end{equation}
(see Theorem~2.7 in~\citep{AB00:e}).

Under the lemma conditions, the value $G_0(x,0)=\int_0^{\infty}p(s,x,0)\,ds$ is finite for each $x\in\mathbf{Z}^d$. By Lemma~\ref{Fmonot}, the function $F(z,t,x)$ is monotonous in $t$ for each fixed $z\geq0$, $x\in\mathbf{Z}^d$, $d\geq1$. Hence, due to Lemma~\ref{f_nonneg} the function $f(F(z,t,0))$ has a finite limit as $t\to\infty$. Then, passing to the limit as $t\to\infty$ in  equility~\eqref{Feq}, by Lemma~\ref{Theorem547} we get that the value $c_d(z,x)$ satisfies the relation~\eqref{Sk29}.

Show that the value $c_d(z,x)$ is strictly positive. Indeed, if the value $c_d(z,0)$ were equal to zero, then the term in the right-hand side of equation~\eqref{Sk29} would vanish, while the term in the left-hand side would be nonzero. Hence, $c_d(z,0)>0$. Due to the estimate $G_0(x,0)\leq G_0$ (see, e.g., representation~(2.11) in~\citep{Y18-TPA:e}) and equation~\eqref{Sk29}, we get $1-c_d(z,x)-e^{-z}\leq G_0f(1-c_d(z,0))$. Therefore, $1-c_d(z,x)-e^{-z} \leq 1-c_d(z,0)-e^{-z}$, and hence
\[
0<c_d(z,0)\leq c_d(z,x),
\]
that completes the proof of the lemma.
\end{proof}

Further we will consider the statements for BRWs with heavy tails~\eqref{E:infindisp}.

\begin{theorem}
\label{QtotalHT}
If  $\beta \leq \beta_c$ then for every $z>0$ and $x\in \mathbf{Z}^d$ one has
\[
\lim_{t\to\infty}F(z,t,x)=1,\qquad \lim_{t\to\infty}Q(t,x)=0
\]
for $d=1$, $\alpha\in[1,2)$, and
\[
\lim_{t\to\infty}F(z,t,x)=1-\tilde c_{d,\alpha}(x),\qquad \lim_{t\to\infty}Q(t,x)=\tilde c_{d,\alpha}(x)
\]
for $d=1$, $\alpha\in(0,1)$ and for $d\geq2$, $\alpha\in(0,2)$, where
$\lim_{z\to\infty}\tilde c_{d,\alpha}(z,x)=\tilde c_{d,\alpha}(x)>0$
and $\tilde c_{d,\alpha}(z,x)$ is the least non-negative root of the equation
\[
1 - \tilde c_{d,\alpha}(z,x) - e^{-z} = G_0(x,0)f(1-\tilde c_{d,\alpha}(z,0)).
\]
\end{theorem}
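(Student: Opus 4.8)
The plan is to observe that the threefold case split in the statement is precisely the dichotomy supplied by the recurrence criteria for heavy-tailed walks in Section~\ref{sect:Model}, and then to invoke the two limit lemmas already proved — Lemma~\ref{CrSubcr} in the recurrent regime and Lemma~\ref{limQfin} in the transient regime — after checking that their hypotheses are met. So the whole argument reduces to careful bookkeeping of which value of $G_0$, hence which value of $\beta_c$, corresponds to each pair $(d,\alpha)$ satisfying~\eqref{E:infindisp}.

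First I would treat the recurrent case $d=1$, $\alpha\in[1,2)$. By the heavy-tailed recurrence criteria we have $G_0=\infty$ here, and therefore $\beta_c=1/G_0=0$. Consequently the hypothesis $\beta\leq\beta_c$ is exactly the sign condition $\beta\leq0$, which is the standing assumption of Lemma~\ref{CrSubcr}. Since that lemma was established for recurrent BRWs with $\beta\leq0$ and with no assumption on the variance of jumps, it applies verbatim and yields $\lim_{t\to\infty}F(z,t,x)=1$ and $\lim_{t\to\infty}Q(t,x)=0$ for every $z>0$ and $x\in\mathbf{Z}^d$, which is the first displayed conclusion.

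Next I would treat the transient cases $d=1$, $\alpha\in(0,1)$ and $d\geq2$, $\alpha\in(0,2)$. For all of these the criteria give $G_0<\infty$, so the BRW is transient and $G_0(x,0)<\infty$ for every $x$. Here I would apply Lemma~\ref{limQfin} directly: that lemma was proved for transient BRWs with no restriction whatsoever on $\beta$, so in particular it holds under $\beta\leq\beta_c$. It gives $\lim_{t\to\infty}F(z,t,x)=1-c_d(x)$ and $\lim_{t\to\infty}Q(t,x)=c_d(x)$, where $c_d(z,x)$ is the least non-negative root of~\eqref{Sk29} and $\lim_{z\to\infty}c_d(z,x)=c_d(x)>0$. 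Renaming $c_d$ as $\tilde c_{d,\alpha}$ to record the dependence of the constant on the tail index $\alpha$ — which enters through $G_0(x,0)$, the latter being $\alpha$-dependent under~\eqref{E:infindisp} — turns~\eqref{Sk29} into the equation displayed in the theorem, and the second pair of conclusions follows.

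There is no genuine analytic obstacle remaining, since the hard work is carried by Lemmas~\ref{CrSubcr} and~\ref{limQfin}; the statement is essentially a repackaging of those two results along the recurrence/transience partition. The one point that demands attention — and the only place an error could creep in — is the matching of hypotheses at the interface of the two regimes: one must verify that the case list of the theorem coincides \emph{exactly} with the recurrence/transience split for~\eqref{E:infindisp}, and that in the recurrent sub-case the identity $\beta_c=0$ is what converts $\beta\leq\beta_c$ into the sign condition $\beta\leq0$ required by Lemma~\ref{CrSubcr}. I would also remark that the restriction $\beta\leq\beta_c$ is needed only to make Lemma~\ref{CrSubcr} applicable in the recurrent case; in the transient case the conclusion in fact holds for arbitrary $\beta$, the hypothesis being imposed merely for uniformity of the statement.
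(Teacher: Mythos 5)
Your proposal is correct and takes essentially the same route as the paper: the recurrent case $d=1$, $\alpha\in[1,2)$ is reduced via $\beta_c=0$ (recurrence criteria / Theorem~1 of Yarovaya, 2017) to Lemma~\ref{CrSubcr}, and the transient cases $d=1$, $\alpha\in(0,1)$ and $d\geq2$, $\alpha\in(0,2)$ are settled by Lemma~\ref{limQfin}, whose proof uses only $G_0<\infty$. The paper merely says the transient part is proved ``by the same scheme as'' Lemma~\ref{limQfin} rather than by direct citation, but since that lemma is stated without assumptions on the jump variance or on $\beta$, your verbatim application of it (including your observation that $\beta\leq\beta_c$ is not actually needed there) is faithful to the paper's argument.
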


\begin{proof}
In the case $\beta\leq\beta_c$, $d=1$, $\alpha\in[1,2)$ the BRW under consideration is recurrent and $\beta_c=0$ (see Theorem~1 in~\citep{Y16-MCAP:e}). Therefore, the appropriate statement follows from Lemma~\ref{CrSubcr}.

The statement for $d=1$, $\alpha\in(0,1)$ and for $d\geq2$, $\alpha\in(0,2)$ can be proved by the same scheme as in Lemma~\ref{limQfin}, because under these conditions BRW with heavy tails is transient, that is $G_0<\infty$.
\end{proof}

\begin{theorem}\label{ThQAsymp}
If $\beta\leq\beta_c$ then the survival probabilities $Q(t,x)$ have the following asymptotics as $t\to\infty$:
\[
Q(t,x)\sim
\begin{cases}
\tilde C_{d,\alpha}(x)v(t)\quad&\text{for~}\beta=\beta_{c},\\
\tilde K_{d,\alpha}(x)u(t)\quad&\text{for~}\beta<\beta_{c},
\end{cases}
\]
where the functions $v(t)$ and $u(t)$ are of the form:
\[
\begin{array}{lll}
  v(t)=t^{\frac{1-\alpha}{2\alpha}}, &\quad u(t)=t^{\frac{1-\alpha}{\alpha}} &\qquad\text{for}\quad d=1,~\alpha\in(1,2),\\
  v(t)=(\ln t)^{-\frac{1}{2}}, &\quad u(t)=(\ln t)^{-1}&\qquad\text{for}\quad d=1,~\alpha=1,\\
  v(t)\equiv 1, &\quad u(t)\equiv 1&\qquad\text{for}\quad d\geq 2,~\alpha\in(0,2)\\
\end{array}
\]
and
\[
\tilde C_{1,\alpha}(x)=\sqrt{2}\left(f''(1)\Gamma(\alpha^{-1})\gamma_{1,\alpha}\right)^{-\frac{1}{2}}, \quad \tilde K_{1,\alpha}(x)=\left(-\beta\,\Gamma(\alpha^{-1})\gamma_{1,\alpha}\right)^{-1}
\]
for $d=1$, $\alpha\in(1,2)$, whereas
\[
\tilde C_{1,1}(x)=\sqrt{2}\left(f''(1)\gamma_{1,1}\right)^{-\frac{1}{2}}, \quad  \tilde K_{1,1}(x)=\left(-\beta\,\gamma_{1,1}\right)^{-1}
\]
for $d=1$, $\alpha=1$.

Both the functions $\tilde C_{d,\alpha}(x)$ and $\tilde K_{d,\alpha}(x)$ for every $x\in\mathbf{Z^{d}}$, $d\geq 2$ and $\alpha\in(0,2)$  are determined by
$1-\beta_{c}G_{0}(x,0)\left(1-\tilde c_{d,\alpha}(0)\right)$, where
$\tilde c_{d,\alpha}(0)$ is the non-negative root of the equation
\[
\beta_{c}(1-\tilde c_{d,\alpha}(0))=f(1-\tilde c_{d,\alpha}(0)).
\]
\end{theorem}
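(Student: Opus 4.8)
The plan is to extract every case from the nonlinear integral equation~\eqref{E:Q} for $Q(t)=Q(t,0)$ together with its $x$-dependent counterpart~\eqref{Feq}, using the Taylor expansion of $f$ at $u=1$. Since $f(1)=0$ and $f'(1)=\beta$, Lemma~\ref{f_nonneg} gives $f(1-Q)=-\beta Q+\tfrac12 f''(1)Q^2+O(Q^3)$ as $Q\to0$, and Theorem~\ref{QtotalHT} splits the analysis by recurrence: for $d\geq2$ (and, transiently, for $d=1,\ \alpha\in(0,1)$) the limit $\lim_{t\to\infty}Q(t,x)=\tilde c_{d,\alpha}(x)>0$ is already positive, so $v(t)=u(t)\equiv1$ and only the constant must be identified; for $d=1,\ \alpha\in[1,2)$ one has $\beta_c=0$, the BRW is recurrent, and $Q(t,x)\to0$, so a genuine rate is required.

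First I would dispose of the transient regime, where there is nothing to prove about the power of $t$. Letting $z\to\infty$ in the fixed-point equation of Theorem~\ref{QtotalHT} gives $1-\tilde c_{d,\alpha}(x)=G_0(x,0)\,f(1-\tilde c_{d,\alpha}(0))$, and combining this with the stated relation $\beta_{c}(1-\tilde c_{d,\alpha}(0))=f(1-\tilde c_{d,\alpha}(0))$ yields $\tilde c_{d,\alpha}(x)=1-\beta_{c}G_{0}(x,0)\bigl(1-\tilde c_{d,\alpha}(0)\bigr)$, which is exactly the asserted common value of $\tilde C_{d,\alpha}(x)$ and $\tilde K_{d,\alpha}(x)$.

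The substantive regime is $d=1$, $\alpha\in[1,2)$, where $p(t)\sim h_{1,\alpha}t^{-1/\alpha}$ is not integrable. In the subcritical case $\beta<0$ I would linearize $f(1-Q)\approx(-\beta)Q$, so that~\eqref{E:Q} becomes $1-Q(t)\approx(-\beta)\,(p\ast Q)(t)$; passing to Laplace transforms $\hat Q(\lambda)=\int_0^\infty e^{-\lambda t}Q(t)\,dt$ and $G_\lambda=\int_0^\infty e^{-\lambda t}p(t)\,dt$ gives $\hat Q(\lambda)\bigl(1+(-\beta)G_\lambda\bigr)\sim\lambda^{-1}$, hence $\hat Q(\lambda)\sim\bigl((-\beta)\lambda G_\lambda\bigr)^{-1}$. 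The Abelian theorem supplies $G_\lambda\sim\gamma_{1,\alpha}\lambda^{1/\alpha-1}$ for $\alpha\in(1,2)$ and $G_\lambda\sim\gamma_{1,1}\lvert\ln\lambda\rvert$ for $\alpha=1$; since $Q$ is monotone by Lemma~\ref{Fmonot}, the Tauberian theorem inverts $\hat Q(\lambda)\sim\bigl((-\beta)\gamma_{1,\alpha}\bigr)^{-1}\lambda^{-1/\alpha}$ into $Q(t)\sim\tilde K_{1,\alpha}\,t^{(1-\alpha)/\alpha}$, the factor $\Gamma(\alpha^{-1})$ entering through the inversion, and its logarithmic version into $Q(t)\sim\tilde K_{1,1}(\ln t)^{-1}$. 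One checks that the discarded quadratic term contributes $o(\lambda^{-1})$ to the transform, legitimizing the linearization. In the critical case $\beta=0$ the leading balance is nonlinear, $1-Q(t)\approx\tfrac12 f''(1)\,(p\ast Q^2)(t)$; inserting the self-consistent ansatz $Q(t)\sim C\,v(t)$ and requiring $(p\ast Q^2)(t)\to\text{const}$ forces, through the Beta-integral $\int_0^1(1-u)^{-1/\alpha}u^{-2a}\,du$ with $2a=1-1/\alpha$, the exponent $v(t)=t^{(1-\alpha)/(2\alpha)}$ and pins down $C=\tilde C_{1,\alpha}$; the endpoint $\alpha=1$ produces $v(t)=(\ln t)^{-1/2}$ and $C=\tilde C_{1,1}$ after the log-scale analogue of the same computation.

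The passage from $x=0$ to general $x$ is then free for $d=1$: the constants $\tilde C_{1,\alpha}$ and $\tilde K_{1,\alpha}$ carry no $x$-dependence, and the asymptotics~\eqref{eq2114} show $p(t,0,0)-p(t,x,0)=O\bigl(t^{-3/\alpha}\bigr)$ decays strictly faster than the $Q$-asymptotics, so~\eqref{Feq} reproduces the same leading term for every $x$. The main obstacle is the critical recurrent case: turning the self-consistent ansatz into a rigorous statement demands two-sided control of the nonlinear convolution against the non-integrable heavy-tailed kernel, and the boundary $\alpha=1$ is the most delicate, since there $Q^2$ is only slowly varying and the mass of $p$ is spread logarithmically across scales, so the averaging step (Lemma~\ref{Theorem547}) must be applied carefully to avoid the spurious divergences generated by naive substitution into the convolution.
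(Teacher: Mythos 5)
Your transient case and your subcritical recurrent case essentially track the paper. For $d\geq2$, $\alpha\in(0,2)$ (and $d=1$, $\alpha\in(0,1)$) the paper likewise reads the constant off Theorem~\ref{QtotalHT}, and your algebra passing from the fixed-point relation \eqref{Sk29} to $\tilde c_{d,\alpha}(x)=1-\beta_c G_0(x,0)\bigl(1-\tilde c_{d,\alpha}(0)\bigr)$ is exactly what the stated formula encodes. For $\beta<\beta_c$, $d=1$, $\alpha\in[1,2)$ the paper runs the same Abelian--Tauberian pipeline through the Laplace transform of \eqref{eqQtotal}, and your exponent $G_\lambda\sim\gamma_{1,\alpha}\lambda^{1/\alpha-1}$ is in fact the correct one (the paper's display $\lambda^{-\frac{1-\alpha}{\alpha}}$ has a sign slip, as its own subsequent relation $\widehat{f(1-Q)}\sim\gamma^{-1}_{1,\alpha}\lambda^{-1/\alpha}$ confirms). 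Your passage to general $x$ via \eqref{eq2114} is a time-domain version of the paper's argument, which instead uses the $x$-independence of the leading term of $G_\lambda(x,0)$ as $\lambda\to0$; both are acceptable at this level of detail.

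The genuine gap is the critical recurrent case, and you flagged it yourself. Matching the self-consistent ansatz $Q(t)\sim C\,v(t)$ through the nonlinear convolution $1-Q\approx\tfrac12 f''(1)\,(p\ast Q^2)$ determines the exponent and constant only if one already knows $Q$ is regularly varying of that index; computing the Beta integral is a consistency check, not a proof, and the two-sided control you defer (especially at the slowly-varying boundary $\alpha=1$) is precisely the hard content of the theorem. The paper sidesteps the whole issue with one structural observation your proposal misses: equation \eqref{eqQtotal} is \emph{linear in the composite unknown} $f(1-Q(\cdot,0))$, so the Laplace identity $\widehat{1-Q}(\lambda)=G_\lambda\,\widehat{f(1-Q)}(\lambda)$ is exact, not a linearization. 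Since Theorem~\ref{QtotalHT} gives $\widehat{1-Q}\sim\lambda^{-1}$, the transform $\widehat{f(1-Q)}\sim(\lambda G_\lambda)^{-1}$ is pinned down for all $\beta\leq\beta_c$ at once; monotonicity of $t\mapsto f(1-Q(t))$ (Lemma~\ref{Fmonot} for $Q$ combined with Lemma~\ref{f_nonneg} for $f$) licenses Feller's Tauberian theorem for densities, yielding the time-domain asymptotics of $f(1-Q(t))$ directly. Only then is the nonlinearity unwound, pointwise and algebraically: since $Q(t)\to0$, one has $f(1-Q)\sim-\beta Q$ for $\beta<0$ and $f(1-Q)\sim\tfrac12 f''(1)Q^2$ for $\beta=0$, so the critical case reduces to taking a square root --- no estimate of a nonlinear convolution against the non-integrable kernel is ever needed. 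The same trick also discharges your remaining obligation in the subcritical case (that the discarded quadratic term contributes $o(\lambda^{-1})$), since nothing is discarded. As written, your argument leaves the case $\beta=\beta_c$, $d=1$, $\alpha\in[1,2)$ --- the heart of the theorem --- unestablished.
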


\begin{proof}
The proof of the statements for recurrent BRW, that is for  $d=1$, $\alpha\in[1,2)$ is based on the proof scheme of Theorem~11 from \citep{Y10-CS:e} about the survival probability asymptotic behavior for BRW with a finite variance of jumps.

According to equation~(23) of Theorem~3 from~\citep{Y09DM:e}, the survival probability  $Q(t,x)$ satisfies to the equality
\begin{equation}\label{eqQtotal}
Q(t,x) = 1 - \int_0^tp(t-s,x,0)f(1-Q(s,0))\,ds.
\end{equation}
Let us consider the case $x=0$. Then the Laplace transform of the equation~\eqref{eqQtotal} has the following form:
\[
\widehat{1-Q}(\lambda) = G_{\lambda}\widehat{f(1-Q)}(\lambda).
\]
Theorem~\ref{QtotalHT} implies that $\widehat{1-Q} \sim \lambda^{-1}$ as $\lambda\to0$. Due to Theorem~1 from~\citep{Y18-TPA:e}, the asymptotics as $\lambda\to0$ of the appropriate Green's functions is as follows:
\[
G_{\lambda} \sim
    \begin{cases}
        \gamma_{1,\alpha}\,\lambda^{-\frac{1-\alpha}{\alpha}}, & \text{if~} d=1,~\alpha\in(1,2), \\
        -\gamma_{1,1}\ln\lambda, & \text{if~} d=1,~\alpha=1,
    \end{cases}
\]
where $\gamma_{1,\alpha}$ and $\alpha\!\in\![1,2)$ is some positive constant. Then, as $\lambda\to0$, we have
\[
\widehat{f(1-Q)}(\lambda) \sim
    \begin{cases}
        \gamma^{-1}_{1,\alpha}\,\lambda^{-\frac{1}{\alpha}}, & \text{if~} d=1,~\alpha\in(1,2), \\
        -\gamma^{-1}_{1,1}(\lambda\ln\!\lambda)^{-1}, & \text{if~} d=1,~\alpha=1.
    \end{cases}
\]

For $\beta\leq0$ the function $f(1-Q(t))$ is monotonous in $t$ due to the monotony of the functions $Q(t)$ (see Lemma~2 in~\citep{Y09DM:e}) and $f(u)$, $u\in[0,1]$ (see Lemma~\ref{f_nonneg}). Therefore, applying the Tauberian theorem for densities (see Theorem~4 Ch. XIII in~\citep{Feller}) as $t\to\infty$ we get the asymptotic equality
\[
f(1-Q(t)) \sim
    \begin{cases}
        (\Gamma(\alpha^{-1})\gamma_{1,\alpha})^{-1}\,t^{\frac{1-\alpha}{\alpha}} & \text{for~} d=1,~\alpha\in(1,2), \\
        \gamma^{-1}_{1,1}(\ln t)^{-1} & \text{for~} d=1,~\alpha=1.
    \end{cases}
\]
From Theorem~\ref{QtotalHT} it follows that  $\lim_{t\to\infty}(1-Q(t))=1$. Therefore, for $f(1-Q(t))$ as $t\to\infty$ the asymptotic behavior of the function $f(u)$ as $u\to1$ from Lemma~2 in~\citep{Y10-CS:e} can be used. Then for $t\to\infty$ we obtain the assertion for the critical case $\beta=\beta_c$:
\begin{equation}\label{QasympCr}
Q(t) \sim
    \begin{cases}
       \sqrt{2}\left(f''(1)\,\Gamma(\alpha^{-1})\,\gamma_{1,\alpha}\right)^{-\frac{1}{2}}\,t^{\frac{1-\alpha}{2\alpha}} & \text{for~} d=1,~\alpha\in(1,2), \\
        \sqrt{2}\left(f''(1)\,\gamma_{1,1}\right)^{-\frac{1}{2}}(\ln t)^{-\frac{1}{2}} & \text{for~} d=1,~\alpha=1,
    \end{cases}
\end{equation}
and for the subcritical case $\beta<\beta_c$:
\begin{equation}\label{QasympSub}
    Q(t) \sim
    \begin{cases}
       \left(-\beta\,\Gamma(\alpha^{-1})\,\gamma_{1,\alpha}\right)^{-1}\,t^{\frac{1-\alpha}{\alpha}} & \text{for~} d=1,~\alpha\in(1,2), \\
        \left(-\beta\,\gamma_{1,1}\right)^{-1}\,(\ln t)^{-1} & \text{for~} d=1,~\alpha=1.
    \end{cases}
\end{equation}

Now we will find the corresponding asymptotics for arbitrary $x\in\mathbf{Z}^d$. Note  that the first term of the asymptotic expansion of Green's function $G_{\lambda}(x,0)$ as $\lambda\to0$ is independent of $x$ (see Theorem~1 in~\citep{Y18-TPA:e}). This implies that the first term of the asymptotic expansion of the probability $Q(t,x)$ as $t\to0$ is independent of $x$. Hence, the representations~\eqref{QasympCr} and~\eqref{QasympSub} remain valid for arbitrary $x$.

Let us establish the required assertions for $d\geq2$, $\alpha\in(0,2)$. By Theorem~\ref{QtotalHT} we have
\[
\lim_{t\to\infty}Q(t,x) = \tilde c_{d,\alpha}(x),
\]
hence, the functions $\tilde C_{d,\alpha}(x)$, $\tilde K_{d,\alpha}(x)$ coincide with $\tilde c_{d,\alpha}(x)$ for each $x\in\mathbf{Z}^d$.
\end{proof}

\section{Conclusion}\label{sect:Conclusion}

Let us compare the behavior of survival probabilities of the particle population for critical and subcritical BRWs with a finite variance of jumps and heavy tails. The limit of  survival probabilities as $t\to\infty$ equals zero for recurrent BRWs and is strictly positive for transient BRWs, which is determined by the Green's function behavior. Therefore, this limit depends on the assumptions~\eqref{E:findisp} and \eqref{E:infindisp}.
On the lattice $\mathbf{Z}^d$, $d\geq3$, a BRW is transient under each assumption~\eqref{E:findisp} or \eqref{E:infindisp}. Therefore, the particle population survival probability for critical and subcritical BRWs tends to a positive constant. On $\mathbf{Z}^2$ the survival probabilities of the heavy-tailed BRW tend to a positive constant for  any $\alpha\in(0,2)$ in both the critical and subcritical cases, while the survival probabilities of the BRW with finite variance of jumps decreases asymptotically to zero as $(\ln t)^{-1}$ for  subcritical cases and as $(\ln t)^{-1/2}$ for critical ones. Thus, on $\mathbf{Z}^2$ the survival probabilities of a BRW with heavy tails decrease slower than the survival probabilities of a BRW with a finite variance of jumps.

Now we consider the asymptotic behavior of the functions $Q(t,x)$ with $x\in\mathbf{Z}$ for $t\to\infty$.
If the variance of jumps is finite then $\beta_{c}=0$ for $d=1$ and we obtain
\[
Q(t,x) \sim
    \begin{cases}
        C_1(x)\,t^{-\frac{1}{4}} & \text{for~} \beta=\beta_c, \\
        K_1(x)\,t^{-\frac{1}{2}} & \text{for~} \beta<\beta_c,
    \end{cases}
\]
where $C_1(x)$, $K_1(x)$ are some positive constants (see~Theorem~1 in \citep{Y10-CS:e}).
If a BRW has heavy tails~\eqref{E:infindisp} then $\beta_{c}=0$ for $\alpha\in [1,2)$ and $\beta_{c}>0$ for $\alpha\in (0,1)$. In this case we get more complex asymptotic behavior for the survival probability $Q(t,x)$ of the particle population
\[
Q(t,x) \sim
    \begin{cases}
         \tilde C_{1,\alpha}(x)  &\text{for~} d=1,\ \alpha\in(0,1)\text{~and~}\beta=\beta_c,\\
        \tilde C_{1,1}(x)\,(\ln t)^{-\frac{1}{2}} & \text{for~} d=1,\ \alpha=1\text{~and~}\beta=\beta_c,\\
        \tilde C_{1,\alpha}(x)\,t^{-\frac{1}{4}} & \text{for~}d=1,\ \alpha\in(1,2)\text{~and~} \beta=\beta_c,\\
         \tilde K_{1,\alpha}(x)  &\text{for~} d=1,\ \alpha\in(0,1)\text{~and~}\beta<\beta_c,\\
        \tilde K_{1,1}(x)\,(\ln t)^{-1} & \text{for~} d=1,\ \alpha=1\text{~and~}\beta<\beta_c,\\
        \tilde K_{1,\alpha}(x)\,t^{-\frac{1}{2}} & \text{for~} d=1,\ \alpha\in(1,2)\text{~and~}\beta<\beta_c,
    \end{cases}
\]
where $\tilde C_{1,\alpha}(x)$, $\tilde K_{1,\alpha}(x)$ are some positive constants (see~Theorem~\ref{ThQAsymp}).

\section{Acknowledgements}
The authors are supported by the Russian Foundation for Basic Research, Project number \mbox{17-01-00468}.


\end{document}